\documentclass[11pt]{amsart}

\usepackage{amssymb,amsmath,amsfonts,amsthm,color}
\usepackage{bbold}   % \uno

\newtheorem{remark}{Remark}[section]
\newtheorem{theorem}{Theorem}[section]
\newtheorem{corollary}{Corollary}[section]
\newtheorem{lemma}{Lemma}[section]
\def\uno{\mathbb 1}
\newcommand{\fro}{\mathrm{F}} %%% norma di Frobenius 
\renewcommand{\t}{\mathrm{T}} %%% trasposta 
\newcommand{\Tr}{\mathrm{tr}} %%% trace 

\numberwithin{equation}{section}

% \makeatletter
% \def\ps@pprintTitle{%
% 	\let\@oddhead\@empty
% 	\let\@evenhead\@empty
% 	\def\@oddfoot{}%
% 	\let\@evenfoot\@oddfoot}
% \makeatother

\begin{document} 

\title[Localization of dominant eigenpairs %\\
and planted communities]{Localization of dominant eigenpairs %\\
and planted communities %\\
by means of Frobenius inner products}

\author{Dario Fasino}
%\address{Department of Chemistry, Physics, and Environment, University of Udine, Udine, Italy.}

\author{Francesco Tudisco}
%\address{Department of Mathematics and Computer Science, Saarland University, Saarbr\"ucken, Germany}

%\ead{tudisco@cs.uni-saarland.de}
\maketitle 

\centerline{\em \normalsize{Dedicated to the memory of Professor Miroslav Fiedler}}

% \fntext[fndf]{The work of this author has been partially supported by INDAM-GNCS.}
% \fntext[fnft]{The work of this author has been partially supported by the ERC Grant NOLEPRO.}

%\centerline{\em }

%\medskip
%[Localization of dominant eigenpairs]
%\title{Localization of dominant eigenpairs %\\
%and planted communities %\\
%by means of Frobenius inner products}

%\author{\|Dario |Fasino|, Udine,
 %       \|Francesco |Tudisco|, Saarbr\"ucken}

%\author{Dario Fasino}
%\address{Dipartimento di Scienze matematiche, informatiche e fisiche,
%Universit\`a di Udine, Udine, Italy.}
%\email{dario.fasino@uniud.it}
%\thanks{Partially supported by Istituto Nazionale di Alta Matematica, Italy.}

%\author{Francesco Tudisco}
%\address{Department of Mathematics and Computer Science, Saarland University, Saarbr\"ucken, Germany.}
%\email{tudisco@cs.uni-saarland.de}
%\thanks{Supported by ERC grant NOLEPRO}

%\rec{\today}

% \keywords
%   Dominant eigenpairs, cones of matrices,
%  spectral method, community detection
% \MSC[2010]15A18,  % Eigenvalues, singular values, and eigenvectors
% 15B48  %Positive matrices and their generalizations; cones of matrices 
% \end{keyword}

\begin{abstract}
We propose a new localization result for the leading eigenvalue and eigenvector of a symmetric matrix $A$. The result exploits the  Frobenius inner product between $A$ and a given rank-one landmark matrix $X$. Different choices for $X$ may be used, depending upon the problem under investigation. In particular, we show that the choice where $X$ is the all-ones matrix allows to estimate the signature of the leading eigenvector of $A$, generalizing previous results on Perron-Frobenius properties of matrices with some negative entries. As another  application we consider the problem of 
community detection in graphs and networks. The problem is solved by means of modularity-based spectral techniques, following the ideas pioneered by Miroslav Fiedler in mid 70s. We show that a suitable choice of $X$ can be used to provide new quality guarantees of those techniques, when the network follows a stochastic block model.
\end{abstract}

\smallskip
\noindent \textbf{Keywords. }  Dominant eigenpairs, cones of matrices,
 spectral method, community detection.  \textit{2010 MSC:} 15A18,  % Eigenvalues, singular values, and eigenvectors
 15B48  %

% \thanks
% This research has been supported by Istituto Nazionale di Alta Matematica (Italy) 
% and ERC grant NOLEPRO.
% \endthanks

% \maketitle

%% Non capisco perche la dedica non compaia!!
%\dedicatory{Dedicated to the memory of Professor Miroslav Fiedler}

% --------------------------------------------------
\section{Introduction}

Consider the following result, found in \cite{tarazaga2001}:

\begin{theorem}   \label{thm:tarazaga}
If a symmetric $n\times n$ matrix $A$ satisfies 
\begin{equation}   \label{eq:tarazaga}
   \uno^\t A\uno \geq \sqrt{(n-1)^2+1}\,\|A\|_\fro 
\end{equation}
then  its spectral radius is a simple eigenvalue and a corresponding eigenvector is nonnegative.
\end{theorem}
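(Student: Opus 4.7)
My strategy is to pass to the spectral decomposition $A = \sum_{i=1}^n \lambda_i u_i u_i^\t$ (with $\lambda_1 \geq \cdots \geq \lambda_n$ and $u_1,\ldots,u_n$ orthonormal) and introduce the Fourier coefficients $c_i = \uno^\t u_i$, which satisfy $\sum_i c_i^2 = n$. The two sides of \eqref{eq:tarazaga} then become $\uno^\t A \uno = \sum_i \lambda_i c_i^2$ and $\|A\|_\fro^2 = \sum_i \lambda_i^2$, turning the hypothesis into a single quadratic constraint that couples the eigenvalues of $A$ with the expansion of $\uno$ in the eigenbasis.

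First, I would use the Rayleigh principle at the unit vector $\uno/\sqrt{n}$ to get $\lambda_1 \geq \uno^\t A \uno /n \geq \sqrt{(n-1)^2+1}\,\|A\|_\fro /n$. Subtracting the squared lower bound from the identity $\sum_i \lambda_i^2 = \|A\|_\fro^2$ yields $\sum_{i\geq 2}\lambda_i^2 \leq 2(n-1)\|A\|_\fro^2/n^2$, and in particular $|\lambda_i| \leq \sqrt{2(n-1)}\,\|A\|_\fro/n$ for every $i \geq 2$. The elementary inequality $(n-1)^2 + 1 > 2(n-1)$, equivalent to $(n-2)^2 > 0$, then shows $\lambda_1 > |\lambda_i|$ for all $i \geq 2$, so $\lambda_1$ is simple and equals the spectral radius.

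For the nonnegativity of a corresponding eigenvector, I would apply Cauchy--Schwarz to $\uno^\t A\uno = \sum_i \lambda_i c_i^2$, obtaining $\sum_i c_i^4 \geq (\uno^\t A\uno)^2/\|A\|_\fro^2 \geq (n-1)^2 + 1$. Combined with $\sum_i c_i^2 = n$, the mass of the sequence $(c_1^2,\ldots,c_n^2)$ must concentrate on a single index; using the eigenvalue gap from the previous step, this index has to be the one corresponding to $\lambda_1$, so $c_1^2$ is forced to be close to $n$. Writing $u_1 = (c_1/n)\uno + \sqrt{1 - c_1^2/n}\, w$ with $w \perp \uno$ and $\|w\|=1$, the closeness of $c_1$ to $\sqrt n$ pins $u_1$ so near $\uno/\sqrt n$ that its entries cannot change sign, which gives a nonnegative eigenvector after choosing the global sign so that $c_1 \geq 0$.

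The main obstacle I expect is this last step: a crude Davis--Kahan estimate on the angle between $u_1$ and $\uno/\sqrt n$ is not strong enough to rule out a pointwise sign change, so the bound on $c_1$ must be squeezed by using simultaneously the moment inequality $\sum_i c_i^4 \geq (n-1)^2 + 1$, the eigenvalue gap, and the fact that the extremal perpendicular unit vectors $w$ with $\min_i w_i = -\sqrt{(n-1)/n}$ are precisely what the constant $\sqrt{(n-1)^2+1}$ is tuned to exclude. Because the threshold is tight, there is essentially no slack in any of these estimates, and the borderline case $n=2$ may deserve a separate direct check.
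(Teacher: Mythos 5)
Your first half is correct and is in fact more elementary than the paper's own route: the paper obtains simplicity and dominance of $\lambda_1$ by applying the Hoffman--Wielandt theorem to $A$ and its Frobenius-orthogonal projection onto $\mathrm{span}(\uno\uno^\t)$ (Theorem \ref{thm:main} with $c=\cos(A,J)>1/\sqrt2$), whereas your combination of the Rayleigh quotient at $\uno/\sqrt n$ with $\sum_{i\ge2}\lambda_i^2=\|A\|_\fro^2-\lambda_1^2\le 2(n-1)\|A\|_\fro^2/n^2$ and $(n-1)^2+1>2(n-1)$ needs nothing beyond the spectral theorem. (Both arguments, and indeed the statement itself, degenerate at $n=2$: $A=I_2$ attains equality in \eqref{eq:tarazaga} with a double top eigenvalue; the paper's general version, Theorem \ref{thm:beyond}, avoids this case by requiring $k<n/2$.)

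The nonnegativity claim is where there is a genuine gap, and you half-acknowledge it. You correctly identify the target, namely $c_1^2\ge n-1$ (equivalently $\cos(u_1,\uno)^2\ge(n-1)/n$, the exact threshold below which a unit vector may have a negative entry), but the tools you list do not reach it as stated. From $\sum_i c_i^2=n$ and $\sum_i c_i^4\ge(n-1)^2+1$ the immediate conclusion is only $\max_j c_j^2\ge\bigl((n-1)^2+1\bigr)/n=n-2+2/n$, which for $n>2$ is strictly below $n-1$, and vectors aligned only to that extent do change sign; sharpening this to $\max_j c_j^2\ge n-1$ is possible but needs a further ordering/convexity argument you do not give, and even then one must still rule out that the concentrating index belongs to some $\lambda_j$ with $j\ne1$, which your eigenvalue bound $|\lambda_j|\le\sqrt{2(n-1)}\,\|A\|_\fro/n$ does not do for small $n$ without extra work. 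The paper closes precisely this step by keeping the ordering of the eigenvalues rather than applying Cauchy--Schwarz: with $\mu_i=\lambda_i/\|A\|_\fro$ and $\xi=c_1^2/n$ it bounds $\cos(A,J)=\tfrac1n\sum_i\mu_i c_i^2\le\mu_1\xi+\mu_2(1-\xi)\le\mu_1\xi+\sqrt{1-\mu_1^2}\,(1-\xi)\le\sqrt{2\xi^2-2\xi+1}$, and comparing with $\cos(A,J)^2\ge\bigl((n-1)^2+1\bigr)/n^2$ forces $\xi\ge(n-1)/n$ exactly; the sign conclusion is then packaged combinatorially in Lemma \ref{lem:ckn} and Remark \ref{rem:ckn}. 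Replacing your fourth-moment inequality by this ordered two-term bound would let the rest of your outline go through.
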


The symbol $\uno$ above stands for a vector
whose entries are $1$'s.
This theorem, whose original proof relies upon 
studying specific convex cones of nonnegative matrices and the solution of certain 
linear programming problems, shows various intriguing aspects.
For example, it reveals that a result which is typical 
of the Perron-Frobenius theory of nonnegative matrices 
can be valid for matrices having some negative entries.
Moreover, it can establish a localization property 
of the dominant eigenvector with respect to the central 
ray of the positive orthant, without involving the
spectral separation of the associated eigenvalue.
On the other hand, the authors of \cite{tarazaga2001}
missed other interesting consequences. For example, in the hypotheses
of the aforementioned theorem, the rightmost eigenvalue
of $A$ is larger than but quite close to $r/n$ where $r$ denotes
the right hand side of \eqref{eq:tarazaga},
as we shall prove in the following.

The purpose of this paper is to shed light on some imaginable generalizations
of Theorem \ref{thm:tarazaga}, along with a few more consequences and
a possible  application in network analysis. 
It is well known that in many occasions the analysis of graphs and networks
% draws its results and methods from 
interacts profitably with matrix theory, see \cite{FiedlerRev}.
Also in this circumstance we discovered an unexpected contact 
between these subjects.

The paper is organized as follows. Before concluding this introduction,
we collect hereafter some notation and preliminary results. In Section \ref{sec:localization} we state, prove and discuss our new localization result for the leading eigenvalue and an associated eigenvector of a symmetric matrix $A$. The result is given in terms of  the  angle between $A$ and a given non-zero rank-one matrix $X=xx^\t$, measured in terms of the Frobenius inner product between $A$ and $X$. The generality of the landmark matrix $X$ allows to apply the theorem in various contexts. In Section \ref{sec:signature} we observe that the choice $X=\uno \uno^\t$  can be used to obtain an improved version of  Theorem \ref{thm:tarazaga}, with an alternative proof and further details 
on the size of the leading eigenvalue of $A$, and on the signature of the associated leading eigenvector. 
In the last section we apply our main results to the analysis of a simple spectral method in community detection. We consider the planted partition model,
which is a widespread benkmark in community detection, and 
estimate the fraction of correctly classified vertices that is almost certainly 
attained in large graphs.

\subsection{The Frobenius inner product and matrix norm}

The space %$\mathbb{R}^{n\times n}$ 
of $n\times n$ real matrices is naturally endowed with
the Frobenius inner product $\langle A,B\rangle = \Tr(AB^\t)$
and the associate matrix norm $\|A\|_\fro = \langle A,A\rangle^\frac12$.
Correspondingly, the angle between $A$ and $B$
is defined 
by the respective cosine as follows:
\begin{equation}   \label{eq:costheta}
   \cos(A,B) = \frac{\mathrm{tr}(A B^\t)}{\|A\|_\fro\|B\|_\fro} .
\end{equation}
For any fixed matrix $B$ let $\mathcal{P}_B(A)$ denote 
the orthogonal projection of a generic matrix $A$ onto the linear space spanned by $B$. We have the explicit expression
$\mathcal P_B(A) = \tau(A) B$ where
$$
   \tau(A) = \frac{\Tr(AB^\t)}{\|B\|_\fro^2} . 
$$
Indeed, consider the decomposition $A = \tau(A)B + Z$.
The two terms of that decomposition are orthogonal with respect to the Frobenius inner product. In fact, 
$$
   \langle A - \mathcal{P}_B(A),B\rangle = 
   \langle A ,B\rangle - 
   \langle \mathcal{P}_B(A),B\rangle = 
   \langle A ,B\rangle - 
   \tau(A) \|B\|_\fro^2 = 0 , 
$$
and $Z$ is the residual of the projection.

\begin{remark}  \label{rem:rk1}
When $B = vv^\t$ is a symmetric, positive semidefinite rank-one matrix, 
equation \eqref{eq:costheta} becomes
\begin{equation}   \label{eq:cosrk1}
   \cos(A,vv^\t) = \frac{v^\t Av}{v^\t v\|A\|_\fro} .
\end{equation}
In that case we also have $\tau(A) = v^\t Av/(v^\t v)^2$ and
the only nontrivial eigenvalue of $\mathcal P_B(A)$ 
is $v^\t Av/v^\t v$.
\end{remark}

We conclude this introduction by recalling a few useful results.
If $A$ is symmetric, its eigenvalues are considered
in nonincreasing order, $\lambda_1(A) \geq \lambda_2(A) \geq \ldots$ and we have the formula $\|A\|_\fro^2 = \sum_{i=1}^n \lambda_i(A)^2$.
In what follows we will make use of a special case of the Hoffman-Wielandt theorem, see e.g.,
\cite[Thm.\ 9.21]{FiedlerBook}:

\begin{theorem}   \label{thm:HW}
Let $A$ and $B$ be symmetric $n\times n$ matrices, then 
$$
   \sum_{i=1}^n (\lambda_i(A)-\lambda_i(B))^2 
   \leq \|A-B\|_\fro^2 .
$$
\end{theorem}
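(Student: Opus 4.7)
The plan is to use the spectral theorem to reduce the claim to an inequality about doubly stochastic matrices, and then to invoke Birkhoff's theorem together with the rearrangement inequality.

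First, I would write the spectral decompositions $A = U \Lambda_A U^\t$ and $B = V \Lambda_B V^\t$, where $U, V$ are orthogonal and $\Lambda_A, \Lambda_B$ are diagonal matrices whose entries are $\lambda_1(A) \geq \cdots \geq \lambda_n(A)$ and $\lambda_1(B) \geq \cdots \geq \lambda_n(B)$. Exploiting the orthogonal invariance of $\|\cdot\|_\fro$, I would rewrite $\|A - B\|_\fro^2 = \|\Lambda_A - Q \Lambda_B Q^\t\|_\fro^2$, with $Q = U^\t V$ orthogonal. Expanding the square and using $\|\Lambda_A\|_\fro^2 = \sum_i \lambda_i(A)^2$ and the analogue for $B$, the target inequality reduces to
\[
\Tr(\Lambda_A\, Q\, \Lambda_B\, Q^\t) \leq \sum_{i=1}^n \lambda_i(A)\,\lambda_i(B).
\]

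Next, I would rewrite the left-hand side entrywise as $\sum_{i,j} \lambda_i(A)\,\lambda_j(B)\,q_{ij}^2$ and observe that the nonnegative matrix $P = (q_{ij}^2)$ has all row and column sums equal to $1$, because the rows and columns of $Q$ are unit vectors. Hence $P$ is doubly stochastic, so by Birkhoff's theorem it is a convex combination of permutation matrices $P_{\sigma_k}$ with coefficients $\alpha_k \geq 0$ summing to $1$. Consequently,
\[
\Tr(\Lambda_A\, Q\, \Lambda_B\, Q^\t) = \sum_k \alpha_k \sum_{i=1}^n \lambda_i(A)\,\lambda_{\sigma_k(i)}(B).
\]
By the rearrangement inequality applied to the two nonincreasing sequences $(\lambda_i(A))$ and $(\lambda_i(B))$, each inner sum is bounded above by $\sum_i \lambda_i(A)\,\lambda_i(B)$, and the desired bound follows from the convexity of the combination.

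The main obstacle I expect is the clean identification of $(q_{ij}^2)$ as doubly stochastic and the correct invocation of the rearrangement inequality in the right orientation; once these ingredients are lined up, the reduction from the Frobenius norm bound to a scalar optimization over permutations carries the proof through without further work.
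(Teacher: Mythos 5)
Your argument is correct and complete: the reduction via orthogonal invariance to $\Tr(\Lambda_A\,Q\,\Lambda_B\,Q^\t) \leq \sum_i \lambda_i(A)\lambda_i(B)$, the observation that $(q_{ij}^2)$ is doubly stochastic, and the combination of Birkhoff's theorem with the rearrangement inequality for the two nonincreasing eigenvalue sequences together give exactly the stated bound. Note, however, that the paper itself offers no proof of this statement --- it is quoted as a known special case of the Hoffman--Wielandt theorem with a reference to Fiedler's book (Thm.\ 9.21 there) --- so there is nothing internal to compare against; what you have written is the classical proof of this result, and it stands on its own.
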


% --------------------------------------------------
\section{Localization of a dominant eigenpair} \label{sec:localization}  %%% A general result}

Hereafter we prove that, if the angle between 
a symmetric matrix $A$ and a symmetric rank-one matrix is 
sufficiently small then $A$ has one simple, dominant eigenvalue, 
and the associated spectral projector is close to a multiple
of the rank-one matrix. 
If $u$ and $v$ are vectors, the notation $\cos(u,v)$ has the obvious meaning given by the Euclidean inner product.

\begin{theorem}   \label{thm:main}
Let $A$ be symmetric and let $X = xx^\t$ be a nonzero rank-one matrix.
Moreover, let $c = \cos(A,X) > 0$ and $s = \sqrt{1-c^2}$. Then:
\begin{enumerate}
\item
% $\lambda_1(A)^2 \geq \frac{c^2}{s^2} \sum_{i=2}^n\lambda_i^2(A)$.
$\lambda_1(A) \geq\mu$ where $\mu = c\|A\|_\fro$.
In particular, if $c > 1/\sqrt{2}$ then $\lambda_1(A)$ is simple
and dominant;
\item
$|\lambda_1(A) - \mu|/|\lambda_1(A)| \leq s$;
\item
let $v_1$ be an eigenvector of $\lambda_1(A)$. 
If $c^2 \geq \frac12$ then $\cos(v_1,x)^2 \geq \xi$
where $\xi\in[\frac12,1]$ is the largest root of
$c^2 = 2\xi^2 - 2\xi + 1$. % \xi^2 + (1-\xi)^2$.
\end{enumerate}
\end{theorem}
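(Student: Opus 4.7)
The plan is to prove the three assertions in order, making systematic use of Remark~\ref{rem:rk1}, which identifies $\mu=c\|A\|_\fro$ with the Rayleigh quotient $x^\t A x/(x^\t x)$. For (1), the inequality $\lambda_1(A)\geq\mu$ then follows immediately from the variational characterization of $\lambda_1(A)$; for simplicity and dominance when $c>1/\sqrt{2}$, I would combine this with $\|A\|_\fro^2=\sum_i\lambda_i(A)^2$: the bound $\lambda_1(A)^2\geq c^2\|A\|_\fro^2$ forces every $\lambda_j(A)^2\leq(1-c^2)\|A\|_\fro^2 = s^2\|A\|_\fro^2$ for $j\neq 1$, and the condition $c^2>s^2$ then yields $\lambda_1(A)^2>\lambda_j(A)^2$ for each such $j$.

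For (2), I would apply Theorem~\ref{thm:HW} with $B=\mathcal{P}_X(A)$. By Remark~\ref{rem:rk1}, $B$ has rank one with a single nonzero eigenvalue equal to $\mu$, and the orthogonality of the decomposition $A=B+(A-B)$ yields $\|A-B\|_\fro^2=\|A\|_\fro^2-\|B\|_\fro^2=(1-c^2)\|A\|_\fro^2=s^2\|A\|_\fro^2$. Hoffman--Wielandt then gives
\[
(\lambda_1(A)-\mu)^2+\sum_{i\geq 2}\lambda_i(A)^2 \leq s^2\|A\|_\fro^2.
\]
Substituting $\|A\|_\fro^2 = \lambda_1(A)^2 + \sum_{i\geq 2}\lambda_i(A)^2$ on the right-hand side and rearranging produces $(\lambda_1(A)-\mu)^2 + c^2 \sum_{i\geq 2}\lambda_i(A)^2 \leq s^2\lambda_1(A)^2$, and (2) follows after dropping the nonnegative tail.

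For (3), I would first normalize $\|x\|=1$ and decompose $x=\alpha v_1+\sqrt{1-\alpha^2}\,w$ with $w$ a unit vector orthogonal to $v_1$; then $\alpha^2=\cos^2(v_1,x)$, and the Rayleigh quotient evaluates as $\mu=p\lambda_1(A)+(1-p)\beta$, with $p=\alpha^2$ and $\beta=w^\t A w$. Because $w$ lies in the invariant subspace orthogonal to $v_1$, Cauchy--Schwarz gives $\beta^2\leq w^\t A^2 w\leq\max_{i\geq 2}\lambda_i(A)^2\leq\|A\|_\fro^2-\lambda_1(A)^2$. Eliminating $\beta$ and rescaling by $\|A\|_\fro$, with $\lambda=\lambda_1(A)/\|A\|_\fro\in[c,1]$, leads to the key inequality
\[
(c - p\lambda)^2 \leq (1-p)^2(1-\lambda^2).
\]
Viewed as a quadratic inequality in $\lambda$, its discriminant factors as $(1-p)^2[p^2+(1-p)^2-c^2]$, so a real $\lambda$ can exist only when $p\in[0,1-\xi]\cup[\xi,1]$. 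The most delicate step of this plan will be ruling out the lower branch: by computing the larger root $\lambda_+$ of the quadratic explicitly and comparing it with $c$, one checks that $\lambda_+<c$ whenever $p\leq 1-\xi$ and $c>1/\sqrt{2}$, contradicting the admissibility $\lambda\geq c$. This leaves $p\geq\xi$, which is the required conclusion.
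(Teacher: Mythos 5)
Your argument is correct and, for claims (1) and (2), essentially coincides with the paper's: the Rayleigh-quotient bound $\lambda_1(A)\geq\mu$ and the Hoffman--Wielandt estimate applied to $B=\mathcal{P}_X(A)$ are exactly the steps used there. Two differences are worth recording. First, your dominance argument in (1) is more elementary than the paper's: you use only $\lambda_1(A)^2\geq c^2\|A\|_\fro^2$ together with $\lambda_1(A)^2+\lambda_j(A)^2\leq\|A\|_\fro^2$, whereas the paper extracts the stronger polygonal inequality $\lambda_1(A)^2>\sum_{i\neq 1}\lambda_i(A)^2$ from the Hoffman--Wielandt computation; for the stated claim your shortcut suffices. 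Second, your treatment of (3) is a genuinely different route. The paper expands $\cos(A,X)=\sum_i\mu_i c_i^2$ over the full spectral decomposition, bounds it by $\mu_1c_1^2+\sqrt{1-\mu_1^2}\,(1-c_1^2)$, and maximizes $f(\mu)=\mu\xi+\sqrt{1-\mu^2}(1-\xi)$ to reach a contradiction; you instead decompose the vector $x$ along $v_1$ and its orthogonal complement, obtaining the two-sided constraint $(c-p\lambda)^2\leq(1-p)^2(1-\lambda^2)$ and reducing the problem to locating the roots of a quadratic in $\lambda$. The same polynomial $2\xi^2-2\xi+1-c^2$ appears as your discriminant factor, and your exclusion of the lower branch checks out: the quadratic evaluated at $\lambda=c$ equals $(1-p)^2(2c^2-1)>0$ and its vertex $cp/(p^2+(1-p)^2)$ lies below $c$ for $p<\tfrac12$, so both roots are below $c$ while admissibility forces $\lambda\geq c$. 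In substance your key inequality is the squared form of the paper's bound, but replacing the calculus maximization by root location is a legitimate and arguably more transparent variant. One caveat applies to both proofs equally: your lower-branch exclusion genuinely needs $c>1/\sqrt2$ (at $c^2=\tfrac12$ one has $\xi=1-\xi=\tfrac12$, the discriminant condition becomes vacuous, and the quadratic vanishes at $\lambda=c$), so the endpoint $c^2=\tfrac12$ admitted by the hypothesis of (3) is not covered; the paper's proof degenerates at the same point (its strict inequality requires $\mu_1>\sqrt{1-\mu_1^2}$), and the paper's own block-diagonal example with a degenerate leading eigenvalue shows the conclusion is delicate exactly there, so this is not a defect specific to your approach.
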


\begin{proof}
Firstly, note the equivalent formulas
$$
   \mu = \cos(A,X) \|A\|_\fro = \frac{x^\t Ax}{x^\t x} . 
$$
Hence, the inequality $\lambda_1(A) \geq \mu$ is due to the variational characterization
of the largest eigenvalue of a symmetric matrix, see e.g.,
\cite[Thm.\ 2.30]{FiedlerBook}.

Let $Z = A - \mathcal{P}_X (A)$.
By simple trigonometry,
$\|Z\|_\fro = s \|A\|_\fro$.
Owing to Remark \ref{rem:rk1},
the matrix $\mathcal{P}_X (A)$ has only one (positive) eigenvalue, which is equal to
$\mu$. 
From Theorem \ref{thm:HW} we have
\begin{align*}
   s^2 \sum_{i=1}^n\lambda_i(A)^2 = 
   s^2 \|A\|_\fro^2 = \|Z\|_\fro^2 
   & = \| A - \mathcal{P}_X (A)\|_\fro^2 \\
   & \geq (\lambda_1(A) - \mu)^2 + \sum_{i=2}^n\lambda_i(A)^2 .
\end{align*}
Consequently, $s^2\lambda_1(A)^2 \geq (1-s^2)\sum_{i=2}^n\lambda_i(A)^2$
and
%$$
%    \lambda_1(A)^2 \geq (1-s^2)\sum_{i=1}^n\lambda_i(A)^2 =    
%    c^2 \|A\|_\fro^2 = \mu^2 .
%$$
%We must have $\lambda_1(A) > 0$, otherwise
%the inequality $0 < c = x^\t Ax/(x^\t x \|A\|_\fro)$
%would not be possible.
%Furthermore, 
if $c> 1/\sqrt 2$ then $1-s^2> 1/2$ and we obtain $\lambda_1(A)^2>\sum_{i\neq 1}\lambda_i(A)^2$, completing the proof of the first claim.
We observe in passing that the last inequality can be expressed a follows: 
The numbers $\lambda_i(A)^2$ fulfil the strict polygonal inequality.
In fact, Fiedler used to call a polygonal inequality the relation
$2\max_i \alpha_i \geq \sum_i \alpha_i$ among nonnegative numbers $\alpha_i$.

\medskip
Rearranging terms we also get
$$
   (\lambda_1(A) - \mu)^2 \leq 
   s^2 \lambda_1(A)^2 - (1 - s^2) \sum_{i=2}^n\lambda_i(A)^2
   \leq 
   s^2 \lambda_1(A)^2 ,
$$
which proves the second claim.
Finally, let 
$$ % \begin{equation}   \label{eq:specdec}
   A = \sum_{i=1}^n \lambda_i v_iv_i^\t 
$$ % \end{equation}
be the spectral decomposition of $A$.
For notational simplicity, let $c_i = v_i^\t x/\|x\|$
and $\mu_i = \lambda_i/\|A\|_\fro$.
Note that $c_i$ is the cosine of the angle between 
the vectors $v_i$ and $x$.
With this auxiliary notation we obtain
$$
   \cos(A,X) 
   = \frac{\sum_{i=1}^n \lambda_i (v_i^\t x)^2}
   {x^\t x\,\|A\|_\fro}
   = \sum_{i=1}^n \mu_i c_i^2 .
$$
Owing to the equations
$$ %% \begin{equation}   \label{eq:uni}
   \sum_{i=1}^n \mu_i^2 = \sum_{i=1}^n c_i^2 = 1 
$$ %% \end{equation}
and the inequalities $\mu_1 \geq \mu_2 \geq \ldots \geq \mu_n$, we have
\begin{equation}  \label{eq:bound}
   c = \cos(A,X) \leq \mu_1 c_1^2 + \mu_2 (1-c_1^2) 
   \leq \mu_1 c_1^2 + \sqrt{1-\mu_1^2}(1-c_1^2) .
\end{equation}
Arguing by contradiction, suppose $c_1^2 < \xi$.
By hypothesis and the first claim, we have $\mu_1^2 \geq c^2 \geq \frac12$. Consequently $\sqrt{1-\mu_1^2} \leq \mu_1$
and
$$
   \mu_1 c_1^2 + \sqrt{1-\mu_1^2}(1-c_1^2) <
   \mu_1 \xi + \sqrt{1-\mu_1^2}(1-\xi) .
$$
Consider the function
$f(\mu) = \mu \xi + \sqrt{1-\mu^2}(1-\xi)$. Simple computations prove that
$$
   \max_{0\leq \mu\leq 1} f(\mu) = \sqrt{2\xi^2 - 2\xi + 1} .
$$
Finally, 
$$
   \mu_1 c_1^2 + \sqrt{1-\mu_1^2}(1-c_1^2) <
   f(\mu_1) \leq \sqrt{2\xi^2 - 2\xi + 1} = c ,
$$
which contradicts \eqref{eq:bound}. Hence we must have $c_1^2 \geq \xi$.
%%% \halmos
\end{proof}

The foregoing theorem can be used as a localization tool for the 
extreme eigenvalues of $A$ and the corresponding eigenvectors. 
In fact, the quantity $\cos(A,xx^\t)$ is maximized when $x$
is an eigenvector associated to the rightmost eigenvalue of $A$. On the other hand, if $\cos(A,xx^\t) < 0$ 
we obtain immediately a corresponding result for the 
leftmost eigenvalue, by replacing $A$ with $-A$.  
It is worth noting that, unlike in  
Perron-Frobenius theory, Theorem \ref{thm:main} makes no assumption about
positivity of $A$. Furthermore, differently from 
classical results in spectral perturbation theory, the estimate
on $\cos(v_1,x)$ does not depend on the spectral separation 
of $\lambda_1(A)$.

\medskip

The following examples show that the hypotheses of Theorem \ref{thm:main} cannot be weakened in general.
Indeed, let $n$ be an even integer, let $x = (y,y)^\t$
where $y\neq 0$ is a vector with $n/2$ entries, and
$$
   A = \begin{pmatrix}
   yy^\t & O \\ O & yy^\t \end{pmatrix} ,
$$ 
where all blocks have size $n/2$. 
This matrix has rank two and 
its two nonzero eigenvalues are equal to $\|y\|_2^2$. Moreover,  
$A$ fulfills the equality $\cos(A,xx^\t) = 1/\sqrt{2}$,
thus showing that the strict inequality $c>1/\sqrt{2}$
in point 1 of Theorem \ref{thm:main}
can be necessary to have a simple eigenvalue.
Furthermore, the vector $v_1 = (y,0)^\t$ is an eigenvector
associated to the nontrivial eigenvalue,
and we have $\cos(v_1,x)^2 = \frac12$. That value meets
the lower bound in point 3 of the theorem.
Another counterexample in the same vein is given by 
$$
   A = \begin{pmatrix}
   O & yy^\t \\ yy^\t & O \end{pmatrix} .
$$ 
Here, the nonzero eigenvalues are simple
but have opposite sign and same modulus.

% --------------------------------------------------

\section{The signature of a leading eigenvector}\label{sec:signature}

In this section we 
derive Theorem \ref{thm:tarazaga} as a special case of a much more general result,
using arguments very different from those in \cite{tarazaga2001}.
Our proof, which is founded on Theorem \ref{thm:HW},
extends immediately to eigenvectors having
a prescribed minimal number of nonnegative entries.

In what follows we denote by $\uno$ the all-ones vector
of appropriate size, and we let $J = \uno\uno^\t$ be an all-ones matrix. Moreover, we use the notation $\mathbb{P}^n_k$ 
to indicate the set of real $n$-vectors
having no more than $k$ (strictly) positive entries,
for $0 \leq k \leq n$.
In particular, 
$\mathbb{P}^n_n = \mathbb{R}^n$; 
$\mathbb{P}^n_{n-1}$ is the complement of the positive orthant
in $\mathbb{R}^n$; 
%% if $k > 1$ then $\mathbb{P}^n_k$ is not convex; 
and $\mathbb{P}^n_{k_1}\subset \mathbb{P}^n_{k_2}$
for $k_1<k_2$.

\begin{lemma}   \label{lem:ckn}
For $0 < k \leq n$ it holds
$$
   \pi_{k,n} := 
   \max_{x\in\mathbb{P}^n_k}\cos(x,\uno) 
   = \sqrt{k/n} .
$$ 
Moreover, we have $\cos(x,\uno) = \pi_{k,n}$ if and only if
$x$ is any vector
with exactly $k$ entries 
having the same positive value
and the remaining $n-k$ entries being zeros.
\end{lemma}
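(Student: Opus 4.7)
The plan is to reduce to nonnegative vectors by a monotonicity argument and then apply the Cauchy--Schwarz inequality on the support.

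The first step I would carry out is to show that the maximum can be restricted to nonnegative vectors in $\mathbb{P}^n_k$. Given any $x\in\mathbb{P}^n_k$, let $\tilde x$ denote the vector obtained by replacing each strictly negative entry of $x$ with zero. Zeroing entries cannot create new positive entries, so $\tilde x$ still lies in $\mathbb{P}^n_k$. Moreover $\tilde x^\t\uno\geq x^\t\uno$ while $\tilde x^\t\tilde x\leq x^\t x$, which together imply $\cos(\tilde x,\uno)\geq\cos(x,\uno)$ whenever the right-hand side is nonnegative (and when it is negative the inequality is trivial). Hence the supremum may be sought over vectors with no negative entries and with at most $k$ strictly positive entries.

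The second step is a direct application of Cauchy--Schwarz. For a nonnegative $x$ whose support $S$ has size $j\leq k$, applying the inequality to the restrictions of $x$ and $\uno$ to $S$ yields $x^\t\uno\leq\sqrt{j}\,\sqrt{x^\t x}$, and dividing by $\sqrt{n\,x^\t x}$ gives $\cos(x,\uno)\leq\sqrt{j/n}\leq\sqrt{k/n}$. This establishes the upper bound $\pi_{k,n}\leq\sqrt{k/n}$.

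For the equality characterization I would trace backwards through the two inequalities used: Cauchy--Schwarz becomes an equality exactly when the positive entries of $x$ are mutually equal, while $\sqrt{j/n}=\sqrt{k/n}$ forces $j=k$; the reduction of the first step forces any extremal $x$ to already be nonnegative. Direct substitution then confirms that every vector of the prescribed form attains $\sqrt{k/n}$. I do not anticipate any real obstacle; the only mildly subtle point is checking that the first-step reduction preserves membership in $\mathbb{P}^n_k$, which follows at once from the observation that zeroing entries cannot increase the count of strictly positive ones.
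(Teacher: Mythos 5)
Your proposal is correct and follows essentially the same route as the paper: the paper likewise first passes to the vector $x^+$ of positive entries (your zeroing step, which increases the numerator and decreases the denominator) and then applies $\|v\|_1\leq\sqrt{m}\,\|v\|_2$, i.e.\ Cauchy--Schwarz on the support, with the equality case obtained by forcing both inequalities to be tight.
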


\begin{proof}
Let $x\in \mathbb{P}^n_k$ and let $m$ be the number of its positive entries. Let $x^+$ be the positive $m$-dimensional vector made by the positive entries of $x$. 
By hypothesis $m \leq k$, therefore
\begin{equation}   \label{eq:cos}
   \cos(x,\uno) = \frac{x^\t \uno}{\sqrt{n}\|x\|_2}
   \leq \frac{\|x^+\|_1}{\sqrt{n}\|x^+\|_2}
   \leq \sqrt{k/n} .
\end{equation}
The rightmost inequality follows from the fact that,
for any vector $v\in\mathbb{R}^m$ one has
$\|v\|_1 \leq \sqrt{m}\|v\|_2$.
The last part of the claim is verified by 
requiring that both inequalities in \eqref{eq:cos}
hold as equalities.
%%% \halmos
\end{proof}

\begin{remark}   \label{rem:ckn}
By the preceding lemma, if a vector $x\in\mathbb{R}^n$ fulfills
$\cos(x,\uno) \geq \pi_{k,n}$ then either 
$\cos(x,\uno) = \pi_{k,n}$, whence $x$ is 
a special nonnegative vector having $k$ positive entries, 
or
$x\notin\mathbb P_k^n$, so it has at least $k+1$ positive entries. In both cases $x$ has at least $k+1$ nonnegative entries.
\end{remark}

\begin{theorem}   \label{thm:beyond}
Let $A$ be a symmetric $n\times n$ matrix such that
\begin{equation}   \label{eq:beyond}
   \uno^\t A\uno \geq \sqrt{(n-k)^2+k^2}\,\|A\|_\fro 
\end{equation}
for some $1\leq k< n/2$. Then, the spectral radius of $A$
is a simple eigenvalue, and a corresponding eigenvector can be oriented so that it has 
at least $n-k+1$ nonnegative entries, of which at least $n-k$ are positive.
\end{theorem}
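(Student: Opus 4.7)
My plan is to realize Theorem \ref{thm:beyond} as a direct application of Theorem \ref{thm:main} with the landmark matrix $X = J = \uno\uno^\t$, combined with Lemma \ref{lem:ckn} and Remark \ref{rem:ckn} to convert an angular bound on the leading eigenvector into a count of its nonnegative entries. Since $\|J\|_\fro = n$, the Frobenius cosine becomes
$$
   c := \cos(A,J) = \frac{\uno^\t A\uno}{n\,\|A\|_\fro},
$$
so hypothesis \eqref{eq:beyond} is precisely $c \geq \sqrt{(n-k)^2+k^2}/n$.

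The first step is to verify $c > 1/\sqrt{2}$, so that Theorem \ref{thm:main}(1) applies. This reduces to the inequality $(n-k)^2+k^2 > n^2/2$, i.e.\ $(n-2k)^2 > 0$, which is strict exactly because $k < n/2$. This immediately gives simplicity and dominance of $\lambda_1(A)$.

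Next I apply Theorem \ref{thm:main}(3) to bound $\cos(v_1,\uno)^2$. I need the largest root $\xi$ of $c^2 = 2\xi^2 - 2\xi + 1$ when $c^2 = ((n-k)^2+k^2)/n^2$. The key algebraic observation is that the discriminant simplifies cleanly:
$$
   1 - \frac{4k(n-k)}{n^2} = \frac{(n-2k)^2}{n^2},
$$
and, since $k < n/2$, the larger root comes out to be $\xi = (n-k)/n$. Therefore, after orienting $v_1$ so that $v_1^\t \uno \geq 0$,
$$
   \cos(v_1,\uno) \geq \sqrt{(n-k)/n} = \pi_{n-k,n},
$$
with $\pi_{n-k,n}$ as defined in Lemma \ref{lem:ckn}. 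Monotonicity of $c^2$ in the hypothesis and of $\xi$ in $c^2$ makes this bound persist under the weaker inequality in \eqref{eq:beyond}.

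Finally, Remark \ref{rem:ckn}, applied with the index $n-k$ in place of $k$, converts the inequality $\cos(v_1,\uno) \geq \pi_{n-k,n}$ into the combinatorial conclusion: either equality holds and $v_1$ has exactly $n-k$ equal positive entries and $k$ zero entries, or $v_1 \notin \mathbb{P}^n_{n-k}$ and so has at least $n-k+1$ positive entries. In both situations $v_1$ has at least $n-k+1$ nonnegative entries, of which at least $n-k$ are positive. The only step I expect to require any care is the explicit computation of the root $\xi$ and the check that it is indeed the \emph{larger} of the two, relying crucially on $k < n/2$; the rest is assembly of results already in place.
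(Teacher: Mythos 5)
Your proposal is correct and follows essentially the same route as the paper: compute $c=\cos(A,J)=\uno^\t A\uno/(n\|A\|_\fro)$, check $c>1/\sqrt 2$ via $(n-2k)^2>0$, apply Theorem \ref{thm:main}(1) and (3) to get $\cos(v_1,\uno)^2\geq\xi=(n-k)/n=\pi_{n-k,n}^2$, and finish with Remark \ref{rem:ckn}. The algebraic simplification of the root $\xi$ and the monotonicity remark match the paper's computation $\xi=\frac12+\frac12\sqrt{2c^2-1}\geq\frac{n-k}{n}$.
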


\begin{proof}
Let $c = \cos(A,J)$.
By hypotheses and \eqref{eq:cosrk1},
$$
   c = \frac{\uno^\t A\uno}{n \|A\|_\fro}
   \geq \sqrt{\frac{(n-k)^2+k^2}{n^2}} .
$$
In particular, $c > 1/\sqrt{2}$.
From Theorem \ref{thm:main} we derive that 
the rightmost eigenvalue of $A$ is positive, simple and dominant,
therefore it coincides with $\rho(A)$.
Moreover, a corresponding eigenvector $v_1$ has
$\cos(v_1,\uno)^2 \geq \xi$ where
$$
   \xi = \frac12 + \frac12\sqrt{2c^2 - 1} \geq \frac{n-k}{n}
   = \pi_{n-k,n}^2.
$$   
By Remark \ref{rem:ckn}, $v_1$ must have at least $n-k+1$ nonnegative entries,
of which at least $n-k$ are positive, and the proof is complete.
\end{proof}

The hypothesis of the last theorem cannot be weakened, as shown by the following
construction. Consider the matrix 
$$
   A = \begin{pmatrix} J & O \\ O & J
   \end{pmatrix}
$$
whose diagonal blocks have order $k\times k$ and $(n-k)\times(n-k)$,
respectively. With this matrix the inequality \eqref{eq:beyond}
is fulfilled as equality. Moreover, an eigenvector 
corresponding to the largest eigenvalue of $A$ is 
$(0,\ldots,0,1,\ldots,1)^\t$, with exactly $n-k$ positive entries,
thus showing optimality of the claim.
Finally, we see immediately that Theorem \ref{thm:tarazaga}
is exactly the case $k = 1$ of the foregoing theorem.

%We can costruct the smallest perturbation $X$
%(in the Frobenius norm) of the matrix $\uno\uno^T$ 
%such that the matrix $Y = \uno\uno^T + X$ has a double dominant eigenvalue.
%Let $\ell = \lambda_1(Y) = \lambda_2(Y)$. Considering for simplicity the case where $n$ is even,
%$$
%   \| X \|_\fro^2 = \| Y - \uno\uno^T\|_\fro^2 \geq
%   (\ell - n)^2 + \ell^2 = 2\ell^2 -2n\ell + n^2 .
%$$
%The rightmost expression is minimized when
%$\ell = n/2$. Hence $\|X\|_\fro^2 \geq n^2/2$.
%Actually, the matrix
%$$
%   X = \begin{pmatrix}
%   O & \uno\uno^T \\ \uno\uno^T & O \end{pmatrix}
%$$
%meets all requirements. In fact, 
%$\|X\|_\fro^2 = n^2/2$ and $Y$ has rank two and 
%its two nonzero eigenvalues equal $\ell = n/2$.

\medskip
Any transformation $A\mapsto A + \alpha I$ leaves unchanged the 
eigenvectors of $A$ and translates its eigenvalues without affecting
their relative ordering. 
This fact suggests the next consequence.

\begin{corollary}
Let $A$ be a symmetric matrix such that for some $\alpha\in\mathbb{R}$ 
and $1\leq k< n/2$ we have
$$
   \uno^\t A\uno \geq \sqrt{(n-k)^2+k^2}\,\|A + \alpha I\|_\fro - n\alpha .
$$
Then the rightmost eigenvalue of $A$ is simple,
and a corresponding eigenvector can be oriented
so that it has at least $n-k+1$ nonnegative entries.
\end{corollary}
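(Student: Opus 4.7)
The plan is to reduce the statement to Theorem~\ref{thm:beyond} applied to the shifted matrix $B = A + \alpha I$. The key observation is that shifting by a scalar multiple of the identity leaves the eigenvectors of $A$ unchanged, shifts every eigenvalue by $\alpha$, and therefore preserves their ordering: $\lambda_i(B) = \lambda_i(A) + \alpha$ for all $i$, and in particular the rightmost eigenvalue of $A$ is associated with the same eigenvector as the rightmost eigenvalue of $B$.

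First I would verify that $B$ fulfills the hypothesis of Theorem~\ref{thm:beyond}. A one-line computation gives $\uno^\t B \uno = \uno^\t A \uno + n\alpha$, so the assumed inequality becomes
$$
   \uno^\t B \uno \;=\; \uno^\t A\uno + n\alpha
   \;\geq\; \sqrt{(n-k)^2+k^2}\,\|A+\alpha I\|_\fro
   \;=\; \sqrt{(n-k)^2+k^2}\,\|B\|_\fro,
$$
which is precisely inequality \eqref{eq:beyond} written for $B$ with the same value of $k$. Since $1 \leq k < n/2$ by assumption, Theorem~\ref{thm:beyond} is applicable.

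Applying Theorem~\ref{thm:beyond} to $B$, I obtain that the spectral radius of $B$ is a simple, dominant eigenvalue and that a corresponding eigenvector $v_1$ can be oriented so as to have at least $n-k+1$ nonnegative entries (in fact, at least $n-k$ strictly positive). Since the theorem yields that $\rho(B)$ is \emph{dominant}, it coincides with $\lambda_1(B)$, that is, with the rightmost eigenvalue of $B$. Translating back: $\lambda_1(B) = \lambda_1(A) + \alpha$ is simple precisely when $\lambda_1(A)$ is simple, and $v_1$ is at the same time an eigenvector of $A$ associated with its rightmost eigenvalue. The sign pattern of $v_1$ transfers verbatim to the conclusion about $A$.

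There is really no significant obstacle here: the whole argument is a mechanical change of variables that exploits the invariance of eigenvectors under scalar shifts together with the identity $\uno^\t(\alpha I)\uno = n\alpha$. The only point that merits a brief remark is that the dominance conclusion in Theorem~\ref{thm:beyond} (not merely the simplicity of $\rho(B)$) is what lets us identify the eigenvector of interest with the rightmost, rather than merely extremal, eigenvalue of $A$.
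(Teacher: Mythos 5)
Your proposal is correct and is exactly the paper's argument: the paper's proof consists of the single line ``apply Theorem~\ref{thm:beyond} to $B = A + \alpha I$,'' and your write-up merely fills in the (routine) verification that $\uno^\t B\uno = \uno^\t A\uno + n\alpha$ turns the assumed inequality into \eqref{eq:beyond} for $B$, plus the shift-invariance of eigenvectors. Your closing remark about needing dominance (not just simplicity of $\rho(B)$) to identify the eigenvector with the rightmost eigenvalue is a fair observation and is consistent with what the proof of Theorem~\ref{thm:beyond} actually establishes.
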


\begin{proof}
It is sufficient to apply Theorem \ref{thm:beyond}
to the matrix $B = A + \alpha I$. 
%%% \halmos
\end{proof}

A convenient, almost optimal value for $\alpha$ to be used in the last corollary 
is $\alpha = -\mathrm{tr}(A)/n$. In fact, with that value the
inequality simplifies considerably, as shown hereafter.

\begin{corollary}
Let $\mu$ and $\sigma^2$ be the mean and variance of the eigenvalues of $A$,
$$
   \mu = \frac1n \sum_{i=1}^n \lambda_i(A), \qquad
   \sigma^2 = \frac1n \sum_{i=1}^n (\lambda_i(A) - \mu)^2 .
$$
If for some $1\leq k< n/2$ we have
$$
%  \frac{\uno^\t A\uno}{n} \geq 
   \frac1n \sum_{i\neq j}A_{ij} \geq 
   \sigma \sqrt{\frac{(n-k)^2+k^2}{n}}  
$$
then the rightmost eigenvalue of $A$ is simple,
and a corresponding eigenvector can be oriented
so that it has at least $n-k+1$ nonnegative entries.
\end{corollary}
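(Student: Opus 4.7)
The plan is to apply the previous corollary with the specific choice $\alpha = -\mathrm{tr}(A)/n = -\mu$. The task then reduces to showing that, under this substitution, the hypothesis of the previous corollary is algebraically equivalent to the hypothesis stated here. So the work is essentially a bookkeeping exercise, and the only nontrivial observation is how $\|A + \alpha I\|_\fro$ simplifies in terms of the variance $\sigma^2$.

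First I would compute $\|A + \alpha I\|_\fro$ for $\alpha = -\mu$. Since the eigenvalues of $A + \alpha I$ are $\lambda_i(A) - \mu$, and the Frobenius norm of a symmetric matrix equals the $\ell^2$ norm of its spectrum, one gets
$$
\|A + \alpha I\|_\fro^2 = \sum_{i=1}^n (\lambda_i(A) - \mu)^2 = n\sigma^2,
$$
hence $\|A + \alpha I\|_\fro = \sigma\sqrt{n}$. Next, I would rewrite the left-hand side of the previous corollary's hypothesis. Splitting the quadratic form over diagonal and off-diagonal entries gives
$$
\uno^\t A \uno = \sum_{i \neq j} A_{ij} + \mathrm{tr}(A) = \sum_{i \neq j} A_{ij} + n\mu,
$$
so that $\uno^\t A\uno - n\alpha = \uno^\t A \uno + n\mu \cdot (\text{wait, } -n\alpha = n\mu)$, actually $-n\alpha = n\mu$, so the inequality
$$
\uno^\t A \uno \geq \sqrt{(n-k)^2 + k^2}\,\|A + \alpha I\|_\fro - n\alpha
$$
becomes
$$
\sum_{i \neq j} A_{ij} + n\mu \geq \sqrt{(n-k)^2 + k^2}\cdot \sigma\sqrt{n} - n\alpha = \sigma\sqrt{n\bigl((n-k)^2 + k^2\bigr)} + n\mu.
$$

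Canceling $n\mu$ from both sides and dividing by $n$ yields exactly the hypothesis of the current corollary,
$$
\frac{1}{n}\sum_{i \neq j} A_{ij} \geq \sigma \sqrt{\frac{(n-k)^2+k^2}{n}}.
$$
At this point the previous corollary applies to $A$ with this value of $\alpha$ and delivers the desired conclusion on the rightmost eigenvalue and its eigenvector; recall that the shift $A \mapsto A + \alpha I$ does not alter eigenvectors nor the order of eigenvalues. I do not expect any real obstacle here: the substance was already absorbed in Theorem~\ref{thm:beyond} and the preceding corollary, and what remains is merely verifying that the choice $\alpha = -\mu$ produces the advertised simplification through the identity $\|A+\alpha I\|_\fro = \sigma\sqrt{n}$.
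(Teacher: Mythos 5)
Your proposal is correct and follows essentially the same route as the paper: choose $\alpha=-\mu$, verify $\|A-\mu I\|_\fro^2=n\sigma^2$, and check that the hypothesis of the preceding corollary reduces to the stated inequality after splitting $\uno^\t A\uno$ into off-diagonal sum plus $n\mu$. The only cosmetic difference is that the paper computes $\|A-\mu I\|_\fro^2$ by expanding traces rather than by shifting the spectrum, which is equivalent.
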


\begin{proof}
Recall that $\mathrm{tr}(A) = \sum_i\lambda_i(A)$
and $\|A\|_\fro^2 = \sum_i\lambda_i(A)^2$. Now,
\begin{align*}
   \|A - \mu I\|_\fro^2 & = \|A\|_\fro^2 % \sum_{i, j} A_{ij}^2 
   - 2\mu \sum_i A_{ii} + n\mu^2 \\
   & = \sum_i \lambda_i(A)^2 - 2\mu \sum_i\lambda_i(A) + n\mu^2 \\
   & = \sum_i (\lambda_i(A) - \mu)^2 = n\sigma^2 .
\end{align*}
The claim follows by rearranging terms and letting $\alpha = -\mu$ in the preceding corollary.
%%% \halmos
\end{proof}

% -----------------------------------------------------------

\section{Spectral community detection in the stochastic block model}\label{sec:community}

The stochastic block model, also referred to as the planted partition model, is a popular generative model for random graphs
having a prescribed clustering structure, see e.g., \cite{Newman2012,SBM}.
It is often 
% taken in consideration 
considered as a benchmark in the context of graph clustering and community detection on networks. 

Given a set of nodes $V$, the model 
in its most general form
assumes that a partition $\{C_1, \dots, C_k\}$ of $V$ is given, together with a set of connection probabilities $p_{ij}$,
for $1\leq i,j\leq k$. Each $p_{ij}$ defines the probability that any two vertices $u \in C_i$ and $v \in C_j$ are connected. For our purposes the interesting case is that of a bipartition $\{C, \bar C\}$, where $|C|=|\bar C|=n/2$, $\bar C = V\setminus C$,  and a pair  $p_{\mathrm {in}}, p_{\mathrm{out}}$ of connection probabilities: 
$p_{\mathrm {in}}$ is the probability that there exists an edge between any two nodes both belonging to the same subset, whereas $p_{\mathrm{out}}$ is the probability that there exists an edge between two nodes
belonging to different banks of the bipartition.

According to this model, $A$ is an $n\times n$ symmetric 
$\{0,1\}$-matrix whose entry $a_{ij}$ takes the value $1$ with probability 
$p_{\mathrm {in}}$ if both $i$ and $j$ belong to the same cluster, or $p_{\mathrm {out}}$ if $i$ and $j$ belong to different clusters. 
When $p_{\mathrm{in}}$ is bigger than $p_{\mathrm{out}}$ the edges tend to accumulate inside the clusters $C$ and $\bar C$. This phenomenon tends to set up $C$ and $\bar C$ as communities inside the network, and  the adjacency matrix of the graph tends to show a block diagonal predominance.

Various popular and effective techinques for revealing the community structure of a given graph or network %% $\mathcal{G}$
with vertex set $V = \{1,\ldots,n\}$ are based on the spectral analysis of $M$, the modularity matrix of the graph. Several formulations of this matrix have been proposed in recent literature, see \cite{FT2015} for an overview. We consider here the one based on the Erd\"os-R\'enyi random graph model,  defined as follows:
$$
   M = A - \mathcal{P}_J (A) = A - \frac{\mathrm{vol} V}{n^2}J ,
$$
being $A$ the adjacency matrix of the given graph and $\mathrm{vol}V = \uno^\t A \uno$ its volume. 
The definition of this matrix is based on the following remark:
Let $\uno_S$ be the characteristic vector of the set $S \subseteq V$. Then,
$$
   \uno_S^\t M\uno_S = \uno_S^\t A \uno_S - 
   \frac{\mathrm{vol} V}{n^2}(\uno^\t\uno_S)^2 
   = \mathrm{vol}S - \mathrm{vol} V \frac{|S|^2}{n^2} , 
$$
where $\mathrm{vol}S = \uno_S^\t A \uno_S$ is the volume of $S$,
that is, the overall weight of internal edges, 
and the term $\mathrm{vol} V |S|^2/n^2$ quantifies the expected number of edges 
lying in $S$, if edges where placed uniformly at random in the graph. Hence, the maximization of the Rayleigh quotient
$q(S) = \uno_S^\t M\uno_S/\uno_S^\t \uno_S$ with respect to $S$ 
arises naturally as a theoretically based method to detect
the presence of a cluster in the graph: 
If $q(S)$ is ``large'' then the edge density in the subgraph induced by $S$ 
is higher than expected.
However, the combinatorial nature of the maximization procedure
makes it an NP-hard problem. Effective algorithms can be based on a continuous relaxation, leading to the so-called spectral methods.

Loosely speaking, spectral methods for community detection locate clusters in a given graph according to the sign of the entries of the leading eigenvector of $M$. 
The idea, firstly introduced in physics literature \cite{NewmanGirvan,Newman2006},
mirrors the analogous approach widely used for solving graph partitioning problems by exploiting spectral properties of Laplacian matrices, as pioneered by Fiedler \cite{Fiedler1,Fiedler2}.

Given the bipartition $\{C, \bar C\}$ of the vertex set $V = \{1,\ldots,n\}$, 
define the vector
$z = (z_1,\ldots,z_n)^\t$ where $z_i = 1$ if $i\in C$ and
$z_i = -1$ if not, and let $Z = zz^\t$.
By using Theorem \ref{thm:main} we can show that, for certain values of  $p_{\mathrm{in}}$ and $p_{\mathrm{out}}$, the value of
$\cos(M,Z)$ is large, and
the leading eigenvector of $M$ is almost parallel to $z$. 
Consequently, partitioning vertices on the basis of the 
sign of the corresponding entries of the leading eigenvector yields a reliable approximation
of the true bipartition.

Since $z^\t\uno = 0$ we have 
%% $\uno$ belongs to the kernel of $J_C$, therefore we have
$$
   \Tr(MZ)= z^\t Mz = z^\t Az - \frac{\uno^\t A \uno}{n^2}z^\t Jz 
   = z^\t Az . 
$$   
Moreover, $a_{ij} \in \{0,1\}$ implies $a_{ij}^2=a_{ij}$, 
therefore $\|A\|_\fro^2 = \uno^\t A\uno$ and
$$ 
   \|M\|_\fro^2 = \|A\|_\fro^2 -\frac{(\uno^\t A \uno)^2}{n^2}
   =(\uno^\t A \uno)\bigg( 1-\frac{\uno^\t A \uno}{n^2} \bigg)\, .
$$
As the entries of $A$ are independent Bernoulli random variables, the quantities $\uno^\t A \uno$ and $z^\t A z$ are independent random variables as well. In fact, let $\nu_C=\uno_C^\t A \uno_C+\uno_{\bar C}^\t A \uno_{\bar C}$ and $\partial_C=2(\uno_C^\t A\uno_{\bar C})$. Then we have $\Tr(MZ)=\nu_C-\partial_C$ and $\uno^\t A \uno = \nu_C+\partial_C$. Both $\nu_C$ and $\partial_C$ are the sum of i.i.d.\ Bernoulli trials, thus they follow a binomial distribution:
$$
   \nu_C \sim B(n^2/2,p_{\mathrm{in}}) , \qquad
   \partial_C \sim B(n^2/2,p_{\mathrm{out}}) .
$$   
%, with means respectively given by $\mathbb{E}(\nu_C)=p_{\mathrm{in}}(|C|^2+|\bar C|^2)=n^2p_{\mathrm{in}}/2$ and $\mathbb{E}(\partial_C)=2p_{\mathrm{out}}|C||\bar C|=n^2p_{\mathrm{out}}/2$. 
Consequently, 
we have the following statistics:
\begin{align*}
   \mathbb{E}(\uno^\t A\uno) & = 
   \mathbb{E}(\nu_C) + \mathbb{E}(\partial_C) = 
   n^2\,\frac{p_{\mathrm{in}}+p_{\mathrm{out}}}{2} \\
      \mathbb{E}(z^\t Az) & = 
   \mathbb{E}(\nu_C) - \mathbb{E}(\partial_C) = 
   n^2\,\frac{p_{\mathrm{in}}-p_{\mathrm{out}}}{2} \\
   \mathrm{Var}(\uno^\t A\uno) & = 
   \mathrm{Var}(\nu_C) + \mathrm{Var}(\partial_C) =
   \frac{n^2}{2} \big(p_{\mathrm{in}}(1-p_{\mathrm{in}}) 
   + p_{\mathrm{out}}(1-p_{\mathrm{out}}) \big) .
\end{align*}
Moreover, we also have 
$\mathrm{Var}(z^\t Az) = \mathrm{Var}(\uno^\t A\uno)$. 
With the help of the foregoing formulas, the subsequent
lemma estimates the average value of
$\cos(M,Z)$ when $M$ is the modularity matrix of a graph belonging to the stochastic block model introduced before.

\begin{lemma}   \label{lem:stochastic-block-model}
 Let $M = A -(\mathrm{vol} V/n^2)J$ be the Erd\"os-R\'enyi modularity matrix of a  graph belonging to the stochastic block model with two equally sized clusters $C$ and $\bar C$ and  edge probabilities $p_{\mathrm{in}}$ and $p_{\mathrm{out}}$. 
Let $z = (z_1,\ldots,z_n)^\t$ where $z_i = 1$ if $i\in C$ and
$z_i = -1$ if not. Moreover, let
$$
   \gamma = \frac{p_{\mathrm{in}}-p_{\mathrm{out}}}
   {\sqrt{(p_{\mathrm{in}}+p_{\mathrm{out}})
   (2-p_{\mathrm{in}}-p_{\mathrm{out}})}} .
$$
For any fixed $\varepsilon > 0$,
with probability converging to $1$ as $n\to \infty$
we have $\cos(M,Z)^2 \geq \gamma^2 - \varepsilon$.
\end{lemma}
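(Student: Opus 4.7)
The plan is to rewrite $\cos(M,Z)^2$ as a simple rational function of the two random scalars $\uno^\t A\uno$ and $z^\t A z$, whose first two moments have already been tabulated just before the lemma, and then invoke a straightforward concentration argument to compare the value of that rational function with $\gamma^2$.

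First I would assemble three identities that are already displayed in the paragraphs preceding the statement: $\|Z\|_\fro = n$ (since $Z = zz^\t$ and $z^\t z = n$), $\Tr(MZ) = z^\t A z$, and $\|M\|_\fro^2 = (\uno^\t A\uno)\bigl(1 - \uno^\t A\uno/n^2\bigr)$. Combining them yields the exact identity
\begin{equation}
\cos(M,Z)^2 = \frac{(z^\t A z)^2}{(\uno^\t A\uno)\,(n^2 - \uno^\t A\uno)}.
\end{equation}
Plugging in $\mathbb{E}(\uno^\t A\uno) = n^2(p_{\mathrm{in}}+p_{\mathrm{out}})/2$ and $\mathbb{E}(z^\t A z) = n^2(p_{\mathrm{in}}-p_{\mathrm{out}})/2$ on the right-hand side returns exactly $\gamma^2$, so the target is precisely the value of this rational function at the means of its two arguments.

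Next I would use Chebyshev's inequality separately on $\uno^\t A\uno$ and $z^\t A z$. Both variances equal $\tfrac{n^2}{2}\bigl(p_{\mathrm{in}}(1-p_{\mathrm{in}}) + p_{\mathrm{out}}(1-p_{\mathrm{out}})\bigr) = O(n^2)$ while both means are $\Theta(n^2)$, so for any fixed $\delta>0$,
\begin{equation}
\mathbb{P}\bigl(|\uno^\t A\uno - \mathbb{E}(\uno^\t A\uno)| > \delta n^2\bigr) = O(1/n^2),
\end{equation}
and similarly for $z^\t A z$. By a union bound, with probability tending to $1$ both random variables simultaneously lie within $\delta n^2$ of their respective expectations.

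The only mildly delicate step is the last continuity argument, and it is indeed the main obstacle: one must propagate Chebyshev bounds through a rational function without amplification. This works here because on the good event the denominator $(\uno^\t A\uno)(n^2 - \uno^\t A\uno)$ stays of order $n^4$, with both factors bounded away from zero (using the tacit assumption $0 < p_{\mathrm{in}} + p_{\mathrm{out}} < 2$). Consequently the map $(x,y) \mapsto x^2/(y(n^2-y))$ is Lipschitz on the relevant window around $(\mathbb{E}(z^\t A z),\mathbb{E}(\uno^\t A\uno))$, with a Lipschitz constant that exactly matches the scale $\delta n^2$ of the fluctuations and the scale $n^4$ of the denominator, so that a perturbation of $\delta n^2$ in each argument translates into a perturbation of $O(\delta)$ in the ratio. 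Choosing $\delta = \delta(\varepsilon)$ sufficiently small then forces $\cos(M,Z)^2$ to exceed $\gamma^2 - \varepsilon$ on the high-probability event, which is the claim.
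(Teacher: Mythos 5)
Your proposal is correct and follows essentially the same route as the paper: both reduce $\cos(M,Z)^2$ to a rational function of the two scalars $\uno^\t A\uno$ and $z^\t Az$ and compare it with $\gamma^2$ via the moments tabulated before the lemma. If anything, your Chebyshev--union-bound--continuity finish is more explicit than the paper's, which computes the expectations of $\|M\|_\fro^2/n^2$ and $(z^\t Az)^2/n^4$ and then asserts the inequality is ``certainly fulfilled in the limit of large $n$,'' leaning on an independence of those two quantities that your union bound does not require.
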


\begin{proof}
Since $z^\t z = n$, we have the equivalent formulas
$$
   \cos(M,Z) = \frac{z^\t Mz}{n\|M\|_\fro}
    = \frac{z^\t Az}{n^2} \, \frac{n}{\|M\|_\fro} .
$$
Hence, the inequality $\cos(M,zz^\t)^2 \geq \gamma^2 - \varepsilon$
is equivalent to the condition
\begin{equation}   \label{eq:cond}
   \frac{(z^\t Az)^2}{n^4}   
   \geq (\gamma^2 - \varepsilon) \frac{\|M\|_\fro^2}{n^2} .
\end{equation}
From the equation $\mathbb{E}((\uno^\t A\uno)^2) =
\mathrm{Var}(\uno^\t A\uno) + \mathbb{E}(\uno^\t A\uno)^2$
and the preceding expressions 
we can obtain the expectation of $\|M\|_\fro^2/n^2$ in 
the considered stochastic block model:
\begin{align*}
   \frac{1}{n^2}\mathbb{E}\big(\|M\|_\fro^2) & = 
   \frac{1}{n^2}\mathbb{E}(\uno^\t A \uno) - \frac 1 {n^4} 
   \big( \mathrm{Var}(\uno^\t A \uno) + 
   \mathbb{E}((\uno^\t A \uno))^2 \big) \\
   & = \frac{p_{\mathrm{in}}+p_{\mathrm{out}}}{2}
    - \frac 1 {n^4} \bigg( \frac{n^2(p_{\mathrm{in}}-p_{\mathrm{in}}^2 
   + p_{\mathrm{out}}-p_{\mathrm{out}}^2)}{2}
   + \frac{n^4(p_{\mathrm{in}}+ p_{\mathrm{out}})^2}{4} \bigg) \\
%   & = n^2\,\frac{(p_{\mathrm{in}}+p_{\mathrm{out}})
%   (2-p_{\mathrm{in}}-p_{\mathrm{out}})}{2}
%    - \frac{p_{\mathrm{in}}-p_{\mathrm{in}}^2 
%   + p_{\mathrm{out}}-p_{\mathrm{out}}^2}{2} .
   & = \frac{(p_{\mathrm{in}}+p_{\mathrm{out}})
   (2-p_{\mathrm{in}}-p_{\mathrm{out}})}{4}
    + \mathcal{O}(n^{-2}) .
\end{align*}
Since $\mathrm{Var}(z^\t Az) = \mathrm{Var}(\uno^\t A\uno)$, we also have 
\begin{align*}
   \frac{1}{n^4} \mathbb{E}\big((z^\t Az)^2) & = 
   \frac{1}{n^4} \big(
   \mathrm{Var}(z^\t Az) + \mathbb{E}(z^\t Az)^2 \big) \\
   & = \frac{(p_{\mathrm{in}}- p_{\mathrm{out}})^2}{4} 
   + \mathcal{O}(n^{-2}).
\end{align*}
Owing to the independence of $\|M\|_\fro^2$
and $(z^\t Az)^2$, with the value of $\gamma$ given in the claim
the inequality \eqref{eq:cond}
is certainly fulfilled in the limit of large $n$.
%%% \halmos 
\end{proof}

Finally, we exploit Theorem \ref{thm:main}
and the preceding lemma
to estimate certain spectral properties of the modularity matrix $M$ that are relevant
for the community detection problem. In particular, we focus on $\lambda_1(M)$,
which is an important indicator of the detectability of the planted partition
$\{C,\bar C\}$ \cite{Newman2012,Newman2006}, and the overlap between that
partition and the nodal sets of a leading eigenvector of $M$.

\begin{theorem}   \label{thm:stochastic-block-model}
In the same hypotheses and notations of Lemma \ref{lem:stochastic-block-model},
with probability approaching $1$ in the limit for large $n$ we have:
\begin{enumerate}
\item
$\lambda_1(M) \geq \mu$ and $|\lambda_1(M) - \mu|/\lambda_1(M) \leq \sqrt{1 - \gamma^2}$
where $\mu = (p_{\mathrm{in}}-p_{\mathrm{out}})n/2$.
\item 
Let $v_1$ be an eigenvector of $\lambda_1(M)$. 
If $\gamma^2 \geq \frac12$ then $\cos(v_1,z)^2 \geq \bar \xi$ where
$$
   \bar \xi = \frac12 + \frac12\sqrt{2\gamma^2 - 1} .
$$
\item
The fraction of vertices classified correctly by the
partition $\{i : (v_1)_i \geq 0\}$ and $\{i : (v_1)_i < 0\}$
is at least $\bar\xi$.
\end{enumerate}
\end{theorem}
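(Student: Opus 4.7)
The plan is to apply Theorem~\ref{thm:main} with $A = M$ and rank-one landmark $X = Z = zz^\t$, feeding in the cosine bound $\cos(M,Z)^2 \geq \gamma^2 - \varepsilon$ supplied by Lemma~\ref{lem:stochastic-block-model}. Each of the three parts of the conclusion should then follow from the corresponding part of Theorem~\ref{thm:main} after some unpacking, with an extra Cauchy--Schwarz step for part~(3).

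For part~(1), I would first identify the auxiliary quantity $c\|M\|_\fro$ that plays the role of ``$\mu$'' in Theorem~\ref{thm:main}. By Remark~\ref{rem:rk1} it equals $z^\t M z / z^\t z$, and since $z^\t \uno = 0$ forces $z^\t J z = 0$, we have $z^\t M z = z^\t A z$ and thus $c\|M\|_\fro = z^\t A z / n$. The statistics collected just above Lemma~\ref{lem:stochastic-block-model} give $\mathbb{E}(z^\t A z / n) = (p_{\mathrm{in}} - p_{\mathrm{out}})n/2 = \mu$ and $\mathrm{Var}(z^\t A z / n) = O(1)$, so Chebyshev concentrates this quantity on $\mu = \Theta(n)$ at relative error $o(1)$; plugging into Theorem~\ref{thm:main}(1)--(2) together with the limit $c^2 \to \gamma^2$ yields both claims in~(1). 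For part~(2), as soon as $\gamma^2 \geq \tfrac12$ the lemma forces $c^2 \geq \tfrac12$ with probability tending to one, so Theorem~\ref{thm:main}(3) applies and gives $\cos(v_1, z)^2 \geq \tfrac12 + \tfrac12\sqrt{2c^2 - 1}$. This expression is monotone increasing in $c^2$, so combining with $c^2 \geq \gamma^2 - \varepsilon$ and letting $\varepsilon \downarrow 0$ produces the announced lower bound $\bar\xi$.

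For part~(3), which is the only genuinely new ingredient, the argument is deterministic once~(2) is in hand. After possibly replacing $v_1$ by $-v_1$ assume $v_1^\t z \geq 0$, and let $P$ index the vertices correctly classified under the labeling $(v_1)_i \geq 0 \leftrightarrow z_i = 1$. For $i \in P$ the product $(v_1)_i z_i$ equals $|(v_1)_i|$ (or zero), and for $i \notin P$ it is nonpositive, hence
$$
v_1^\t z \;\leq\; \sum_{i \in P} |(v_1)_i| \;\leq\; \sqrt{|P|}\,\|v_1\|_2
$$
by Cauchy--Schwarz. Dividing by $\|v_1\|_2 \|z\|_2 = \sqrt{n}\,\|v_1\|_2$ and squaring gives $|P|/n \geq \cos(v_1, z)^2 \geq \bar\xi$; since $\bar\xi \geq \tfrac12$ this labeling is already the majority one, so the fraction of vertices classified correctly is at least $\bar\xi$.

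The only real obstacle I anticipate is bookkeeping of the $o(1)$ errors introduced when the random cosine $\cos(M,Z)^2$ is replaced by its limiting value $\gamma^2$ and $z^\t A z / n$ is replaced by its deterministic mean $\mu$; both are controlled by the variance estimates already at hand, so no new probabilistic machinery is required.
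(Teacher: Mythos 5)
Your proposal is correct. For parts (1) and (2) you follow essentially the same route as the paper: apply Theorem \ref{thm:main} to $M$ with landmark $Z=zz^\t$, identify $c\|M\|_\fro = z^\t Mz/z^\t z = z^\t Az/n$ (using $z^\t\uno=0$), and feed in the cosine bound of Lemma \ref{lem:stochastic-block-model}; the paper states this in two lines, and your Chebyshev remark merely makes explicit the concentration of $z^\t Az/n$ at $\mu$ that the paper leaves implicit (strictly, both arguments only give $\lambda_1(M)\geq\mu(1-o(1))$ rather than $\lambda_1(M)\geq\mu$, an imprecision the statement itself carries). For part (3) your route is genuinely different and, as it happens, more robust. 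The paper introduces the sign vector $w$ of $v_1$, asserts $\cos(v_1,z)\leq\cos(w,z)=\cos(w\circ z,\uno)$, and then invokes Remark \ref{rem:ckn}; but that first inequality is false in general (take $v_1=(1,-\epsilon)^\t$ and $z=(1,1)^\t$: then $\cos(v_1,z)\approx 1/\sqrt2$ while $\cos(w,z)=0$), so the paper's chain needs repair, e.g.\ by applying Lemma \ref{lem:ckn} to $v_1\circ z$ instead of $w\circ z$. Your direct estimate $v_1^\t z\leq\sum_{i\in P}|(v_1)_i|\leq\sqrt{|P|}\,\|v_1\|_2$ is exactly the Cauchy--Schwarz step underlying Lemma \ref{lem:ckn}, carried out inline on $v_1\circ z$; it yields $|P|/n\geq\cos(v_1,z)^2\geq\bar\xi$ cleanly, and your closing observation that $\bar\xi\geq\tfrac12$ correctly resolves the labeling ambiguity. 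In short, your argument buys a self-contained and fully valid proof of (3) at the cost of not reusing Remark \ref{rem:ckn}, while the paper's version buys brevity at the cost of an intermediate step that does not hold as written.
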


\begin{proof}
The first two claims are easy consequences of Lemma \ref{lem:stochastic-block-model} and Theorem \ref{thm:main}.
In fact, the value of $\mu$ comes from the average value
of $z^\t Mz/z^\t z$, which is equal to $z^\t Az/n$, while $\bar\xi$ is the largest root of $\gamma^2 =2\xi^2-2\xi+1$.

Let $v_1$ be an eigenvector of $\lambda_1(M)$ oriented so that $\cos(v_1,z) > 0$.
Define $w\in\mathbb{R}^n$ as $w_i = 1$ if $v_i \geq 0$ and $-1$ if not.
Clearly, $0 < \cos(v_1,z) \leq \cos(w,z) = \cos(w\circ z,\uno)$ where
$w\circ z$ is the Hadamard (componentwise) product of $w$ and $z$.
Note that the number of positive entries in $w\circ z$ gives exactly the number
of correctly classified vertices. Let $k$ be the integer part of $\bar \xi n$,
so that we have $\sqrt{\bar \xi} \geq \sqrt{k/n} = \pi_{k,n}$.
Hence, $\cos(w\circ z,\uno) \geq \cos(v_1,z) \geq \pi_{k,n}$.
The last claim follows from Remark \ref{rem:ckn}.
%Let $C = \{i : (v_1)_i \geq 0\}$ and $\bar C = \{i : (v_1)_i < 0\}$ 
\end{proof}

% -----------------------------------------------------------------------

%\author{Dario Fasino}
%\address{Dipartimento di Scienze matematiche, informatiche e fisiche,
%Universit\`a di Udine, Udine, Italy.}
%\email{dario.fasino@uniud.it}
%\thanks{Partially supported by Istituto Nazionale di Alta Matematica, Italy.}

%\author{Francesco Tudisco}
%\address{Department of Mathematics and Computer Science, Saarland University, Saarbr\"ucken, Germany.}
%\email{tudisco@cs.uni-saarland.de}
%\thanks{Supported by ERC grant NOLEPRO}

{\small
\noindent{\em Authors' addresses}:

{\em Dario Fasino}, Dipartimento di Scienze matematiche, informatiche e fisiche,
Universit\`a di Udine, Udine, Italy,
e-mail: \texttt{dario.fasino@uniud.it}.
 
{\em Francesco Tudisco}, Department of Mathematics and Computer Science, Saarland University, Saarbr\"ucken, Germany,
e-mail: \texttt{tudisco@cs.uni-saarland.de}.

}

\end{document}